\documentclass[11pt,a4paper]{article}
\usepackage[utf8]{inputenc}
\usepackage[T1]{fontenc}
\usepackage[margin=1in]{geometry}
\usepackage{graphicx}
\usepackage{amsmath,amssymb,amsfonts}
\usepackage{amsthm}
\usepackage{mathrsfs}
\usepackage{xcolor}
\usepackage{textcomp}
\usepackage{booktabs}
\usepackage{hyperref}

\theoremstyle{plain}
\newtheorem{theorem}{Theorem}

\newtheorem{lemma}[theorem]{Lemma}
\newtheorem{corollary}[theorem]{Corollary}

\theoremstyle{definition}
\newtheorem{definition}{Definition}
\newtheorem{example}{Example}

\theoremstyle{remark}
\newtheorem{remark}{Remark}

\DeclareMathOperator{\Id}{Id}
\DeclareMathOperator{\diag}{diag}

\title{On the Jung--van der Kulk decomposition into Pascal finite factors}

\author{%
  EL\.ZBIETA ADAMUS \\ Faculty of Applied Mathematics, \\
  AGH University of Krakow \\ al.~Mickiewicza 30, 30-059 Krak\'ow, Poland \\
  e-mail: esowa@agh.edu.pl
  \\[0.6cm]
  ZBIGNIEW HAJTO \\ Faculty of Mathematics and Computer Science, \\
  Jagiellonian University \\ ul.~\L ojasiewicza 6, 30-348 Krak\'ow, Poland \\
  e-mail: zbigniew.hajto@uj.edu.pl%
}
\date{}

\begin{document}
\maketitle

\begin{abstract}
Combining the Jung--van der Kulk theorem with the conjugacy invariance of the
Pascal finite class, we show that every polynomial automorphism $F$ of the plane
over an arbitrary field $K$, satisfying $F(0) = 0$, decomposes into the form
$F = \diag(\det J_F, 1) \circ P_1 \circ \dots \circ P_s$, where all $P_i$ are
Pascal finite automorphisms. Since every Pascal finite automorphism has Jacobian
determinant equal to 1, the diagonal factor is the only obstacle: $F$ is a
composition of Pascal finite maps if and only if $\det J_F = 1$. In particular,
Question~3.1 from \cite{ABCH2} has a positive answer in dimension 2 in any
characteristic, which constitutes an analogue of the Exponential Generators
Conjecture in positive characteristic. In characteristic $p$, the factors can be
chosen to have an order dividing $p^2$.
\end{abstract}

\medskip
\noindent\textbf{Keywords:} polynomial automorphism, Pascal finite map,
Jung--van der Kulk theorem, Jacobian problem, positive characteristic, Cremona
group.

\medskip
\noindent\textbf{Mathematics Subject Classification 2020:} 14R10, 14R15, 14E07.

\medskip

\section{Introduction}\label{intro}

Let $K$ be any field and $K[X]$ the polynomial ring over $K$, where  $X=(X_1,\dots,X_n)$. 
A \emph{polynomial endomorphism} or a \emph{polynomial map} is an element $F=(F_1,\dots,F_n) \in K[X]^n$. We say that such an $F$ is \emph{invertible} if there exists $G=(G_1,\dots,G_n) \in K[X]^n$ such that 
\[
  F\circ G = (X_1,\ldots,X_n) \quad\text{and}\quad G\circ F = (X_1,\ldots,X_n).
\] We call such $G$ the \emph{formal inverse} of $F$. 

Regardless of the characteristic of the field $K$, a polynomial map $F$ is 
invertible (in the sense of polynomial automorphisms) if and only if the 
pullback $F^*: P \mapsto P \circ F$ is a $K$-algebra automorphism of the 
ring $K[X]$. This equivalence follows directly from the universal property 
of the polynomial ring. 

However, in positive characteristic, this algebraic invertibility no longer 
coincides with the set-theoretic bijectivity of the induced \emph{polynomial mapping}
\[ F=(F_1,\dots,F_n):K^n \rightarrow K^n, \qquad (X_1,\dots,X_n)\mapsto \big(F_1(X_1,\dots,X_n),\dots,F_n(X_1,\dots,X_n)\big).\]
 For example, for $n=1$ the map $F: \mathbb{F}_3 \to \mathbb{F}_3$ 
given by $F(X) = X + X^3$ induces a bijective function on the finite field 
$\mathbb{F}_3$ (since $X^3 = X$ for all $X \in \mathbb{F}_3$), but it is 
not invertible as a polynomial automorphism.
Whenever we state that $F$ is \emph{invertible}, we mean invertibility in the sense of polynomial automorphisms. 

Write
$\mathrm{GA}_n(K)$ for the group of polynomial automorphisms of $K^n$, viewed
as invertible polynomial maps $F\colon K^n\to K^n$ under composition.  On the other hand, $\operatorname{Aut}_K K[X]$
denotes the group of $K$-algebra automorphisms of $K[X]$. 

For $F=(F_1,\dots,F_n) \in K[X]^n$, we define $\deg F= \max \{\deg F_i : 1\leq i \leq n\}$ and denote by $J_F$ the Jacobian matrix 
\[J_F=\left(\dfrac{\partial F_i}{\partial X_j}\right)_{\substack{1\leq i \leq n \\ 1\leq j \leq n}}\]

We call $F$ a \emph{Keller map} if $\det J_F=\mathrm{const}\ne 0$.
The famous \emph{Jacobian Conjecture} states that over fields of characteristic
zero, every Keller map is a polynomial automorphism.  For fields of positive
characteristic it is false. See \cite{E} for a detailed account.
For Keller maps, one can perform the following procedure of 
standardization of the form or normalization of the determinant.
If  $\det J_F = \mathrm{const} \neq 0$, then
by a linear change of variables $F-F(0)$ and $[J_F(0)]^{-1}J_F$ one may assume that $F(0)=0$ and $\det J_F=1$.  Then $F$ is of the form $F(X)=X+H(X)$, where $X=(X_1, \ldots, X_n)$, i.e.
  \begin{equation}
   \begin{array}{llll}
             F_i(X_1, \ldots, X_n) &=& X_i+H_i(X_1, \ldots, X_n), & i=1, \ldots, n,
            \end{array}
            \label{xh}
\end{equation}
  where $H_i \in K[X]$ has degree $D_i$ and order of vanishing $ord(H_i)=d_i$, with $ d_i\geq 2$. Let $d=\min d_i, D=\max D_i$.
  For the polynomial maps $F(X)=X+H(X)$ with $H(X)$ homogeneous, the Jacobian condition is equivalent to the Jacobian matrix $J_H$ being nilpotent.

The problem was analyzed by examining the relationship between polynomial automorphisms and derivations. In the case of fields or rings of characteristic zero, considering locally nilpotent derivations of the polynomial ring $K[X]$ allows one to construct the class of  exponential automorphisms (see \cite{E}, chapter 2.1).
The Exponential Generators Conjecture (see \cite{E}, 2.1.11) states that the group of automorphisms $\text{Aut}_K K[X]$ is generated by affine automorphisms and exponential automorphisms of $K[X]$. 

In the case of fields of positive characteristic, exponential automorphisms can be replaced by Pascal finite automorphisms (see \cite{ABCH} and \cite{ABCH2}).
In this paper, we  formulate the Pascal Finite Generators Conjecture as a generalization to
arbitrary characteristic of the Exponential Generators Conjecture.
We present examples that support the conjecture.
We give the proof of this conjecture in the two-dimensional case.
It is worth emphasizing that, after seven years, these results provide a complete affirmative answer to Question 3.1 from \cite{ABCH2} in the two-dimensional case over fields of arbitrary characteristic. This constitutes a direct counterpart to the theorem on exponential generators in positive characteristic.

\section{Preliminaries}
\label{prel}

\begin{definition}Let $K$ be a field. An endomorphism $F=(F_1, \ldots, F_n) \in  K[X]^n $ is called
 \begin{enumerate}
  \item \textit{affine} if $\deg{F_i}=1$ for every $i$. The set of all invertible affine endomorphisms forms a subgroup of $\text{Aut}_K K[X]$, called the \emph{affine subgroup}, and denoted by $\text{Aff} (K, n)$.
  
  \item  \emph{elementary}  if there exists  $i \in \{1,\ldots, n\}$ and a polynomial 
  $a \in K[X_1, \ldots, \hat{X_i}, \ldots, X_n]$, for $i=1, \ldots, n$ (here $\hat{X_i}$ means to omit $X_i$) such that
 \[ F_j=X_j, \quad \textrm{for} \quad j \neq i , \quad \textrm{and}  \quad F_i=X_i+a.\]
Such an automorphism is also called an \emph{elementary automorphism in the i-th coordinate}.

  \item \emph{triangular} (or a \emph{de Jonqui$\grave{e}$res automorphism}) if 
  \[F_i \in K[X_i, X_{i+1}, \ldots , X_n] \quad \textrm{ for each } \quad  1 \leq i \leq n.\] It is a standard fact that every triangular automorphism can be written in the form
   \[  F_i=\lambda_iX_i+a_i,\quad \textrm{ where }\quad \lambda_i \in K^* \quad \textrm{ and} \quad a_i \in K[ X_{i+1}, \ldots , X_n].\] 
   The set of all triangular automorphisms forms a subgroup of $\text{Aut}_K K[X]$, denoted by $J(K,n)$ and called \emph{de Jonqui$\grave{e}$res} group. 
   Every element $F \in J(K,n)$ admits a factorization \[F = g \circ E_1 \circ \ldots \circ E_n,\]
   where $g \in \text{Aff}(K,n)$ and $E_1,\ldots, E_n$ are elementary automorphisms.
  \item \emph{triangularizable},  if there exists an automorphism  $T \in K[X]^n $ such that $T^{-1} \circ F \circ T$ is triangular.
   If such a $T$ can be chosen to be linear  (i.e. $T \in GL(n,K)$), then $F$ is said to be \emph{linearly triangularizable}.
   
   \item  \emph{tame} if it is generated by elementary and affine endomorphisms.
 Affine and elementary automorphisms are invertible, so every tame map --- being generated by them --- is invertible as well. Triangular maps are tame, being compositions of elementary automorphisms, and so are triangularizable maps whose conjugating automorphism is tame; hence these are invertible too. 
 The set of all tame automorphisms forms a subgroup of $\text{Aut}_K K[X]$, called the \emph{tame subgroup}, denoted by $T (K, n)$.
 \end{enumerate}
\end{definition}

Let  $V$ be a $K$-vector space. A $K$-linear map $f: V \rightarrow V$ is called \emph{locally nilpotent} if for every $v \in V$ there exists $n \in \mathbb{Z}_{>0}$ such that $f^n(v)=0_V$.
Assume now that $A$ is a $K$-algebra. A $K$-derivation $D : A \rightarrow A$ is called locally nilpotent if $D$ viewed as a $K$-linear map is locally nilpotent.

When working with fields of characteristic zero, one can use locally nilpotent derivations to construct the class of the  exponential automorphisms (see \cite{E}, chapter 2.1). 
If $A$ is a $\mathbb{Q}$-algebra and $D:A \rightarrow A$ a locally nilpotent derivation, then the formula
\[ \exp D(a)=\sum_{p \geq 0} \frac{D^p(a)}{p!}, \quad a \in A,\]
where the sum is finite since $D$ is locally nilpotent, defines a ring automorphism of $A$. Automorphisms of $A$ of the form $\exp D$, where $D$ is a locally nilpotent derivation are called \textit{exponential automorphisms}.
Reciprocally, a ring homomorphism $f:A \rightarrow A$ is an exponential automorphism if and only if $E:=f-\Id_A$ is locally nilpotent. In this case, the map $D:A \rightarrow A$ defined by
\[ D(a)=\sum_{i \geq 1} (-1)^{i+1} \frac{E^i(a)}{i}, \quad a \in A,\]
is a locally nilpotent derivation on $A$ and $f=\exp D$ (see \cite{E}, Proposition 2.1.3). \\

 \noindent \textbf{Exponential Generators Conjecture.} Let $R$ be a commutative $\mathbb{Q}$-algebra. Then $\text{Aut}_R R[X]$ is generated by $\text{Aff} (R, n)$ and exponential automorphisms of $R[X]$.

\section{Pascal finite maps}
\label{PFmaps}

In \cite{ABCH} an algorithm for inverting polynomial automorphisms over a field of arbitrary characteristic was proposed. Let $K$ be a field and $X=(X_1,\dots,X_n)$.
For a polynomial map $F$ we write $\sigma_F := F^*$ for the pullback of
Section~\ref{intro}. 
Hence,
$F\mapsto\sigma_F$  is a bijection of $\mathrm{GA}_n(K)$
onto $\operatorname{Aut}_K K[X]$, and  it reverses the
order of composition $\sigma_{F\circ G} = \sigma_G\circ\sigma_F$.
We identify $\mathrm{GA}_n(K)$ with $\operatorname{Aut}_K K[X]$ via this bijection, and accordingly write $F \in \operatorname{Aut}_K K[X]^n$ throughout the rest of the paper.
Composition
is written functionally, i.e. in $g_1 \circ g_2$, the  $g_2$ acts first. On tuples $\sigma_F$ acts
componentwise, $\sigma_F = (\sigma_F,\dots,\sigma_F)$ on $K[X]^n$.

We define 
\[ \Delta_F:K[X]^n \rightarrow K[X]^n, P \mapsto \sigma_F(P)-P.\]
Since $\sigma_F$ is a ring homomorphism, then $\Delta_F=\sigma_F - \Id_{K[X]^n}$ is a $\sigma_F$-derivation on $K[X]^n$, i.e. $\Delta_F(PQ) = \Delta_F(P)\sigma_F(Q)+P\Delta_F(Q)$, for every $P, Q \in K[X]^n $.
The action of $\Delta_F $ is extended 
componentwise to the $n$-tuples of polynomials.
This extension is well-defined because $\Delta_F$ acts as a linear operator 
on each coordinate independently, preserving the structure of the 
module of polynomial mappings.

To a polynomial map $P$ in $K[X]^n$, we associate a sequence $(P_k)_{k \in \mathbb{Z}_{\geq 0}}$ of
polynomial maps in $K[X]^n$ defined by $P_{k}=\Delta^k_F(P)$, where $\Delta^k_F$ denotes $\Delta_F \circ \stackrel{k}{\dots} \circ \Delta_F$.
We say that $F$ is \emph{Pascal finite} if $\Delta^m_F(\Id) = 0$ for some integer $m$, i.e.\ $\Delta_F^m(X_i) = 0$ for every coordinate $X_i$. Equivalently, the corresponding endomorphism $\sigma_F$ acts on each generator $X_i$ as a root of the polynomial $(T-1)^m$, that is $(\sigma_F - \Id)^m(X_i) = 0$. By \cite{ABCH2}, Proposition~2.1, this is in turn equivalent to $\Delta_F$ being \emph{locally} nilpotent: for every $g \in K[X]$ there exists $m(g)$ with $\Delta_F^{m(g)}(g) = 0$. Note that $\Delta_F$ need not be globally nilpotent as an operator on $K[X]$.

Taking $P=\Id$, Pascal finiteness of $F$ means precisely that $P_m=0$ for the same $m \in \mathbb{Z}_{\geq 0}$. In this case, the inverse $G$ of $F$ is given by
\begin{equation}\nonumber
 G(X)= \sum_{l=0}^{m-1} (-1)^l P_l(X).
\end{equation}
The algorithm described above works effectively for polynomial mappings of the form (\ref{xh}).
It is worth noting that it works for fields of arbitrary characteristic, since only substitution and subtraction are used.

For $g\in K[X]\setminus\{0\}$, the \emph{Pascal depth of $g$ w.r.t.\ $F$}
        is $\tau_K^F(g)=\min\{m\in\mathbb{Z}_{>0}:\Delta_F^m(g)=0\}$
        (set $=\infty$ if no such $m$ exists, $\tau_K^F(0)=0$).
$F$ is \emph{Pascal finite of depth} $m$ if $P_m=\Delta_F^m(\Id)=0$
        and $m$ is minimal. We write $\tau_K(F)=m$.
        Then 
    \begin{equation}\label{eq:depth_max_relation}
    \tau_K(F) = \max_{1 \leq i \leq n} \tau_K^F(X_i).
    \end{equation}
    
  Define
\[ 
\text{PF}(K, n, m) = \{ F \in \text{Aut}_K K[X]^n : \,\tau_K(F) \le m \},\]
\[ \text{PF}_{\mathrm{su}}(K,n,m)=\{F\in \text{PF}(K,n,m): \, F(0)=0\text{\ and\ }J_{F}(0)=\mathrm{Id}\}.\]
The subscript ``su'' stands for \emph{strictly unipotent}. This condition eliminates affine deformations, ensuring that every element in $\mathrm{PF}_{\mathrm{su}}(K, n, m)$  admits the representation given in (\ref{xh}).
Letting the depth vary, we define the class of all Pascal finite
automorphisms of arbitrary (finite) depth as
\[
  \text{PF}(K,n) \;=\; \bigcup_{m\ge 1} \text{PF}(K,n,m)
  \;=\; \bigl\{\, F \in \text{Aut}_K K[X]^n : \tau_K(F) < \infty \,\bigr\},
\]
and analogously 
\[ \text{PF}_{su}(K,n) = \bigcup_{m\ge 1} \text{PF}_{su}(K,n,m).\]
Since $\tau_K(F) \le m$ implies $\tau_K(F) \le m+1$, we have the ascending
chain $PF(K,n,m) \subseteq PF(K,n,m+1)$. 
Note that $PF(K,n)$ is closed under taking
inverses and powers, but \emph{not} under composition (see \cite{ABCH2}),
and hence is not a subgroup of $\text{Aut}_K K[X]^n$.

Equivalently, $PF(K,n)$ is the direct (filtered) limit
\[
  PF(K,n) = \varinjlim_{m} PF(K,n,m)
\]
of the directed system $\bigl(PF(K,n,m)\bigr)_{m \ge 1}$, whose transition
maps are the inclusions $PF(K,n,m) \hookrightarrow PF(K,n,m+1)$. It is the system of sublevel
sets of the depth function $\tau_K \colon PF(K,n) \to \mathbb{Z}_{>0}$, since
$PF(K,n,m) = \tau_K^{-1}\bigl(\{1, \dots, m\}\bigr)$. In positive
characteristic the filtration moreover controls the torsion (see Lemma~\ref{torsion_p}).

\begin{remark}
 The subset of Pascal finite cubic homogeneous polynomial automorphisms in dimension $\leq 4$ generate the whole set of cubic homogeneous polynomial automorphisms.
\end{remark}
\begin{proof}
In dimension $\le 3$ every cubic homogeneous Keller map is triangularizable
(\cite[Proposition~7.1.1]{E}), hence Pascal finite. In dimension $4$, by Hubbers'
classification \cite[Theorem~7.1.2]{E}\cite{Hu2} there are eight conjugacy classes; seven
are Pascal finite, and the eighth is a composition of two Pascal finite
automorphisms (\cite[Remark~3.2]{ABCH2}). In all dimensions $\le 4$ the cubic
homogeneous automorphisms are therefore Pascal finite or compositions of such,
so the Pascal finite ones generate the whole set.
\end{proof}

Bass, Connell and Wright showed in \cite{BCW} that to prove the Jacobian Conjecture it is enough to prove it for $n \geq 2$ and $F = (X_1 + H_1 , \ldots , X_n + H_n )$ and where
each $H_i$ is  homogeneous of degree $3$.
Composition of Pascal finite polynomial maps is invertible.
Hence, in order to prove the Jacobian Conjecture, it is enough to show that every cubic homogeneous polynomial map with jacobian determinant equal to $1$
is a composition of Pascal finite polynomial maps.

In the case of fields of characteristic $0$, if $F$ is Pascal finite polynomial automorphism, then  $\Delta_F=\sigma_F-\Id_{K[X]^n}$ is locally nilpotent (see \cite{ABCH2}, Proposition 2.1). We conclude that $\sigma_F$ is an exponential automorphism. Reciprocally, if $\sigma_F$ is an exponential automorphism, then $\Delta_F=\sigma_F-\Id_{K[X]^n}$ is locally nilpotent, hence $\Delta_F^m(\Id)=0$ for some $m>0$ and $F$ is Pascal finite.

\begin{corollary}\label{cor:char0-exp}
 If $K$ is a field of characteristic zero, then a polynomial map $F: K^n \rightarrow K^n$ is Pascal finite if and only if the corresponding polynomial automorphism of the ring $K[X]$ is exponential.
\end{corollary}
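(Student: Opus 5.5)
The plan is to prove the two implications separately, working throughout with the pullback $\sigma_F$ on $A=K[X]$ and the operator $\Delta_F=\sigma_F-\Id$. Since $F\mapsto\sigma_F$ identifies $\mathrm{GA}_n(K)$ with $\operatorname{Aut}_K K[X]$, it is enough to show that $\sigma_F$ is exponential exactly when $\Delta_F^m(X_i)=0$ for some $m$ and all $i$.

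For the direction ``Pascal finite $\Rightarrow$ exponential'', assume $\Delta_F^m(X_i)=0$ for all $i$. I will invoke \cite{ABCH2}, Proposition~2.1, which says that $\Delta_F$ is then locally nilpotent on all of $K[X]$. For a self-contained argument, I would use the twisted Leibniz rule $\Delta_F(PQ)=\Delta_F(P)\sigma_F(Q)+P\Delta_F(Q)$. Iterating it gives
\[
\Delta_F^k(PQ)=\sum_{j}\binom{k}{j}\Delta_F^{j}(P)\,\Delta_F^{k-j}\bigl(\sigma_F^{\,j}(Q)\bigr).
\]
Because $\sigma_F$ commutes with $\Delta_F$, each $\sigma_F^{\,j}(Q)$ is killed by the same power of $\Delta_F$ as $Q$. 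Hence the nilpotency index is subadditive on products, and it is also bounded on sums. Since $X_1,\dots,X_n$ generate $K[X]$, local nilpotency follows.

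We then have a ring homomorphism $f=\sigma_F$ with $E=f-\Id_A$ locally nilpotent. Since $\operatorname{char}K=0$, $A$ is a $\mathbb{Q}$-algebra, so the logarithm $D=\sum_{i\ge1}(-1)^{i+1}E^i/i$ is a finite sum on each element. By \cite{E}, Proposition~2.1.3, $D$ is a locally nilpotent derivation and $f=\exp D$. Therefore $\sigma_F$ is exponential.

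For the converse, suppose $\sigma_F=\exp D$ with $D$ locally nilpotent. Then
\[
\Delta_F=\exp D-\Id=D\sum_{p\ge1}\frac{D^{p-1}}{p!}=D\,U,
\]
where $U$ is a polynomial in $D$ and so commutes with $D$. Fix $X_i$ and choose $N$ with $D^N(X_i)=0$. The space $V=\operatorname{span}\{D^j(X_i): j\ge 0\}$ is finite-dimensional and stable under $D$. On $V$, $D$ is nilpotent, and $\Delta_F^N=D^NU^N$ vanishes there. Thus $\Delta_F^N(X_i)=0$. Taking $m$ to be the maximum of these $N$ over $i$ gives $\Delta_F^m(\Id)=0$, so $F$ is Pascal finite.

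I expect the only genuinely nontrivial step to be the passage from nilpotency on the generators to local nilpotency on all of $K[X]$. The correct twisted binomial formula is the point to handle carefully, or one simply cites \cite{ABCH2}. The remaining steps are the standard exp/log correspondence, which needs characteristic zero to divide by $p!$ and by $i$.
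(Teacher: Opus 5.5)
Your proof is correct and follows the same route as the paper. Both directions go through local nilpotency of $\Delta_F=\sigma_F-\Id$: \cite{ABCH2}, Proposition~2.1 passes from the generators to all of $K[X]$, and \cite{E}, Proposition~2.1.3 supplies the exp/log correspondence. Your twisted Leibniz formula and the factorization $\exp D-\Id=DU$ just give self-contained proofs of the two facts the paper cites.
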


Pascal finite automorphisms are defined in any characteristic, hence they can be considered as  a generalization of exponential automorphisms to positive characteristic.

Write
\[
  \text{Aut}_{su}(K,n) \;=\;
  \bigl\{\, F \in \text{Aut}_K K[X]^n : F(0)=0,\ J_F(0)=\text{Id} \,\bigr\}
\]
for the class of strictly unipotent automorphisms, i.e.\ those of the
form (\ref{xh}).
Denote by $\langle PF(K,n)\rangle_{\circ}$ the \emph{compositional closure} of
$PF(K,n)$, i.e.\ the set of all finite compositions of Pascal finite
automorphisms. By \cite{ABCH2}, Theorem 3.1 the inverse of a Pascal finite automorphism is again Pascal finite; hence $\langle PF(K,n)\rangle_{\circ}$, being closed under composition by definition and now also under inversion, is a group. (The class $PF(K,n)$ itself is not a group, since a composition of Pascal finite maps need not be Pascal finite.)
We formulate the following conjecture.\\
    
\noindent \textbf{Pascal Finite Generators Conjecture}. Let $K$ be a field of arbitrary characteristic. Then
\begin{equation}\nonumber
\text{Aut}_{su}(K,n) \;\subseteq\; \langle PF(K,n)\rangle_{\circ},\end{equation}
i.e. every polynomial automorphism $F \in K[X]^n$ of the form 
(\ref{xh}) is a composition of finite number of Pascal finite ones (not necessarily of the form (\ref{xh})).

\begin{remark}
 If the Pascal Finite Generators Conjecture holds for fields $K$ characteristic zero, then the Exponential Generators Conjecture for the maps $K^n \rightarrow K^n$ of the form (\ref{xh}) holds. 
\end{remark}

\section{Two-dimensional case}
\label{dim2}

It is known that $T(K,n)= \langle \text{Aff}(K,n), J(K,n)\rangle$. 
By the Jung--van der Kulk theorem (see \cite{Ju}, \cite{vdK}) all two-dimensional polynomial automorphisms over a field of arbitrary characteristic are tame.
We recall the precise statement of this theorem for the convenience of the reader.

\begin{theorem}[Jung--van der Kulk, \cite{Ju}, \cite{vdK}]
\label{thm:JvdK}
Let $K$ be an arbitrary field. Then
\[
  \mathrm{GA}_2(K) = \text{Aff}(K,2) *_{B(K,2)} J(K,2),
\]
where $B(K,2) = \text{Aff}(K,2) \cap J(K,2)$ is the group of affine triangular automorphisms. In particular, every $F \in \mathrm{GA}_2(K)$ can be written as an alternating product $F = g_1 \circ g_2 \circ \cdots \circ g_N$, where each $g_i$ belongs to either $\text{Aff}(K,2)$ or $J(K,2)$, and consecutive letters come from different factors.
\end{theorem}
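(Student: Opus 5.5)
The plan is the classical two-step argument: first prove tameness (generation) by induction on degree, then prove that there are no extra relations (the amalgamated free product structure) by showing that degree is multiplicative along reduced words. Nothing from the earlier sections is needed beyond the definitions of $\text{Aff}(K,2)$, $J(K,2)$ and $B(K,2)$. Since the argument must work in every characteristic, I will avoid the Jacobian entirely and use only leading forms.

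\emph{Generation.} Let $F=(f,g)\in\mathrm{GA}_2(K)$ and induct on $\deg f+\deg g$. If this sum is $2$, then $F$ is affine. Otherwise, after a linear swap we may assume $\deg f\ge\deg g$ and $\deg f\ge 2$. Write $\bar f,\bar g$ for the top homogeneous components. Since $K[f,g]=K[X_1,X_2]$, the variable $X_1$ is a polynomial $\sum c_{ij}f^ig^j$. The leading forms of the terms of maximal degree must cancel, so $\bar f,\bar g$ are algebraically dependent.

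Two algebraically dependent binary forms are, up to scalars, powers of a common form $h$, with $\bar f=a h^{r}$ and $\bar g=b h^{s}$. The crucial claim is $s\mid r$. Granting it, set $k=r/s$ and compose with the elementary triangular map $(X_1-(a/b^{k})X_2^{k},\,X_2)$. This replaces $f$ by $f-(a/b^k)g^k$, which has strictly smaller degree, and the induction hypothesis applies.

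\emph{Main obstacle: proving $s\mid r$ in arbitrary characteristic.} I would argue as Dicks and Makar-Limanov do. Suppose $s\nmid r$ and put $m=\gcd(r,s)$. Consider the monomials $f^ig^j$ with $0\le i<s/m$. Their leading degrees $\deg(h)\,(ir+js)$ are pairwise distinct, so every element of $K[f,g]$ can be written in a normal form in which no cancellation of leading terms occurs among these monomials. One then shows that $K[f,g]$ has no element of degree $1$, which contradicts $X_1\in K[f,g]$. The only delicate point is the normal-form reduction. It uses the relation $\bar f^{\,s/m}=c\,\bar g^{\,r/m}$ to rewrite high powers of $f$, and one must check that each rewriting step really lowers the degree. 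I would handle this step carefully with a weighted-degree bookkeeping.

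\emph{Amalgamation.} It remains to show that a reduced alternating word is never the identity unless it lies in $B(K,2)$. Take a word with letters alternating between $\text{Aff}(K,2)\setminus B(K,2)$ and $J(K,2)\setminus B(K,2)$. I will show by induction on the length that its degree equals the product of the degrees of its triangular letters, and that this product is $\ge 2$ whenever some triangular letter is present. The inductive step uses the following fact. A non-triangular affine map sends the top form of $X_2$ to a form that is not a multiple of $X_2$ alone. Consequently, composing with a triangular map of degree $k\ge 2$ multiplies the degree by $k$ without cancellation. Hence every such word is a nontrivial automorphism. This gives the amalgamated product structure, and the alternating-product statement follows directly.
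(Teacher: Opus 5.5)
\paragraph{Verdict.} The paper gives no proof of this theorem; it quotes it from Jung and van der Kulk. Your outline therefore has to stand on its own. Its two outer layers are sound.
\begin{itemize}
\item The reduction of tameness to the divisibility claim $s\mid r$ is the classical one. There is one trivial fix: if $\deg g=1$ and $X_1\in K+Kg$, use $X_2$ instead to force the cancellation.
\item The degree argument for the amalgam works once you state the invariant precisely. After a triangular letter $(aX_1+p(X_2),\,bX_2+c)$, the first component has strictly larger degree than the second. A letter of $\text{Aff}(K,2)\setminus B(K,2)$ has second component $\delta X_1+\epsilon X_2+\zeta$ with $\delta\neq 0$, so it moves that top degree into the second component. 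The next triangular letter then raises this component to the power $\deg p$.
\end{itemize}
The genuine gap is the divisibility claim itself. It is the entire content of the theorem, and your sketch does not give a valid argument for it.

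\paragraph{Why the normal-form step fails.} The proposed normal form cannot exist. Since $K[f,g]=K[X_1,X_2]$, the polynomials $f$ and $g$ are algebraically independent. Hence the monomials $f^ig^j$ are linearly independent, and for instance $f^{s/m}\in K[f,g]$ is not in the span of $\{f^ig^j: 0\le i<s/m\}$.

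The relation $\bar f^{\,s/m}=c\,\bar g^{\,r/m}$ holds only between leading forms. Using it to ``rewrite'' $f^{s/m}$ produces the element $q=f^{s/m}-c\,g^{r/m}$. This $q$ has smaller degree, but its leading form is not controlled by $\bar f,\bar g$: it need not lie in $K[h]$, and it can cancel against other terms. Rewriting $q$ in turn is circular, so the procedure never terminates in your normal form. Note also that if such a normal form existed, ``no element of degree $1$'' would follow immediately. So all the difficulty sits in the step you defer to ``weighted-degree bookkeeping''.

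\paragraph{What is actually needed.} You need a lower bound on $\deg P(f,g)$ when the top weighted component of $P$ is divisible by $u^{s/m}-c\,v^{r/m}$.
\begin{itemize}
\item In characteristic $0$ this bound comes from the Jacobian (Poisson-bracket) degree estimate, which uses $\det J_{(f,g)}\in K^*$ and $s/m\neq 0$ in $K$.
\item In characteristic $p$ that mechanism breaks down because of Frobenius cancellations. In characteristic $2$ one has $(X_1^3+X_2^2)^2-(X_1^2)^3=X_2^4$, whereas in characteristic $0$ the same difference has degree $5$.
\end{itemize}
A characteristic-free proof of the divisibility lemma needs a different idea. Supplying one is exactly van der Kulk's contribution, and it is why the paper cites his result rather than reproving it.
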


In \cite{F} Furter investigated this type of maps.
He used the result of Friedland and Milnor, which states that either $F$ is triangularizable, or $F$ is conjugate to an automorphism $G$ such that $\deg G \geq  2$ and $\deg G^n = (\deg G)^n$ for each $n \in \mathbb{Z}_{\geq 0}$ (see \cite{FM}).
He proved that either $\tau:=\frac{\deg F^2}{\deg F} \leq 1$ and the sequence $(\deg F^n)_{n\in \mathbb{Z}_{\geq 0}}$ is bounded, or $\tau$ is an integer greater than or equal to $2$
and the sequence $(\deg F^n)_{n\in \mathbb{Z}_{\geq 0}}$ is a geometric progression of ratio $\tau$.
As a result, we have the criterion that $F: \mathbb{C}^2 \rightarrow \mathbb{C}^2$ is triangularizable if and only if $\deg F^2 \leq \deg F$. 

Pascal finite polynomial automorphisms are a subclass of locally finite ones (see \cite{FMau} and \cite{ABCH2}). Indeed, if $P_m=0$ for some $m$, then  $\sum_{l=0}^m (-1)^{m-l} \binom{m}{l} F^l=0$. 
The class of locally finite maps is strictly larger than the class of Pascal finite maps,
as it contains, for instance, scalings, which are not Pascal finite. 
Both properties are equivalent for polynomial automorphisms of the form (\ref{xh}) (see \cite{ABCH}, proposition 2.1).
Considering Pascal-finiteness, every Pascal finite plane automorphism $F: \mathbb{C}^2 \rightarrow \mathbb{C}^2$ is triangularizable, and in particular $\deg F^2 \leq \deg F$. The converse fails: scalings satisfy the degree bound but are not Pascal finite, the bounded-degree (triangularizable) class being strictly larger than the Pascal finite one.

\begin{example}\label{n3example}
 Consider $F=(F_1,F_2)  \in \mathbb{C}[X_1,X_2]^2$ of the form (\ref{xh}) with jacobian equal to $1$.
\[\left\{ \begin{array}{lll} F_1 &= & X_1+(X_1^2+X_2)^3 \\ F_2 &=& X_2+X_1^2 \end{array} \right.\]
It is not Pascal finite since, as one can check, $\deg F^2 = 36 >\deg F=6$. By the Jung--van der Kulk theorem $F$ is tame. Define two elementary automorphisms
\[  \begin{array}{lll} Q_1(X_1,X_2) &= & (X_1+X_2^3,X_2) \\ 
Q_2(X_1,X_2) &=& (X_1,X_2+X_1^2) \end{array} .\] Then $F=Q_1 \circ Q_2$. Since
 $Q_1$ and $Q_2$ are quasi-translations, they are Pascal finite. \\
\end{example}

\begin{example}\label{anick}
 Consider  $F =(F_1,F_2,F_3)  \in \mathbb{C}[X_1,X_2,X_3]^3$ known as \emph{Anick automorphism} (see \cite{E}).
  \[ \left\{ \begin{array}{lll} 
 F_1(X) &= & X_1+(X_1- X_2)X_3^2 \\ 
F_2(X) &=& X_2+(X_1- X_2)X_3^2  \\
F_3(X) &=& X_3
\end{array} \right. .\]
It is Pascal finite. One can check that $\Delta^2_F(\Id)=0$. In other words, $F$ is a quasi-translation.

It is a tame automorphism of the ring $\mathbb{C}[X_1,X_2,X_3]$, and can be decomposed as $F=Q^{-1} \circ T \circ Q$, where $Q, T : \mathbb{C}^3 \rightarrow \mathbb{C}^3$ are given by
  \[  \begin{array}{rll} 
 Q(X_1,X_2,X_3) &= & (X_2, \boldsymbol{X_1- X_2},X_3) ,\\ 
T(X_1,X_2,X_3) &=& ( \boldsymbol{X_1+X_2X_3^2}, X_2, X_3) .
\end{array} \]
Compute
  \[  \begin{array}{rll} 
 Q^{-1}(X_1,X_2,X_3) &=& (\boldsymbol{X_1+ X_2}, X_1,X_3).
\end{array} \]
Indeed 
  \[  \begin{array}{lll} 
 Q(X_1,X_2,X_3) &= & (X_2, \boldsymbol{X_1- X_2},X_3) ,\\ 
T\circ Q(X_1,X_2,X_3) &=& (\boldsymbol{X_2+(X_1- X_2)X_3^2}, X_1- X_2, X_3) ,\\
F(X_1,X_2,X_3 ) = Q^{-1}\circ T \circ Q (X_1,X_2,X_3) &=& (\boldsymbol{X_1+(X_1- X_2)X_3^2}, X_2+(X_1- X_2)X_3^2,X_3).
\end{array} \]
Since $Q, T$ are quasi-translations, they are Pascal finite. One can observe that $F$ is linearly triangularizable, since $T=Q \circ F \circ Q^{-1}$ is triangular and $Q$ is affine.
\end{example}

\begin{remark}
 Observe that $Q$ and $Q^{-1}$ in Example \ref{anick} are not of the form (\ref{xh}).\\
\end{remark}

\section{Auxiliary lemmas}
\label{auxlem}

\begin{lemma} \label{Newton_sum}Let $K$ be any field.
For any polynomial mapping $F$ and $m \ge 0$, we have
\begin{equation} \label{newtons_formula}
 F^{(l)} = \sum_{k=0}^{l} \binom{l}{k} \Delta_F^k(X),
\end{equation}
where $F^{(l)}$ denotes the $l$-fold composition. If $F \in \text{PF}_{\mathrm{su}}(K,n,m) \setminus \text{PF}_{\mathrm{su}}(K,n,m-1)$, the sum on the right terminates at $k = \tau_K(F) - 1$ independently of $m$.
\end{lemma}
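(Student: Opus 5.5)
The plan is to write $\sigma_F = \Id + \Delta_F$ as operators on $K[X]^n$ and expand the binomial. Two points need care. First, the two operators commute, since $\Delta_F = \sigma_F - \Id$ is a polynomial in $\sigma_F$. Second, the binomial theorem for commuting operators uses only integer coefficients, so it is valid in every characteristic; the coefficient $\binom{l}{k}$ is read as its image in $K$.

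The first step is to identify the $l$-fold composition with iterated pullback applied to the identity tuple:
\[
F^{(l)} = \sigma_F^{\,l}(X),
\]
by induction on $l$. The case $l=0$ reads $X = X$. For the inductive step, $\sigma_F^{\,l+1}(X) = \sigma_F^{\,l}(\sigma_F(X)) = \sigma_F^{\,l}(F)$. We have $\sigma_F^{\,l}(P) = P\circ F^{(l)}$ for every tuple $P$, again by induction using $\sigma_{G}\circ\sigma_F = \sigma_{F\circ G}$ applied to powers of $F$, where all powers commute. Hence $\sigma_F^{\,l}(F) = F\circ F^{(l)} = F^{(l+1)}$. The order of composition is harmless here precisely because everything involved is a power of the single map $F$.

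The second step is the expansion
\[
\sigma_F^{\,l} = (\Id + \Delta_F)^l = \sum_{k=0}^{l}\binom{l}{k}\Delta_F^k,
\]
which holds as identities of additive operators on $K[X]^n$. Composition distributes over sums because $\sigma_F$ and $\Delta_F$ are additive (indeed $K$-linear). Applying both sides to $X$ gives \eqref{newtons_formula}.

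For the termination claim, take $F\in \text{PF}_{\mathrm{su}}(K,n,m)\setminus\text{PF}_{\mathrm{su}}(K,n,m-1)$. By definition $\tau_K(F)=m$, so $\Delta_F^{m}(X)=0$, and hence $\Delta_F^k(X)=\Delta_F^{k-m}(\Delta_F^m(X))=0$ for all $k\ge m$. All terms with $k\ge \tau_K(F)$ therefore vanish, and the sum ends at $k=\min(l,\tau_K(F)-1)$. This bound depends only on $\tau_K(F)$, which is the meaning of ``independently of $m$''. More generally, for any $F\in\text{PF}_{\mathrm{su}}(K,n,m)$ the truncation is governed by the true depth $\tau_K(F)\le m$, not by the chosen $m$.

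The only delicate point is the first step, the direction of composition versus pullback. Since $\sigma_{F\circ G}=\sigma_G\circ\sigma_F$ reverses order, one must check that $\sigma_F^{\,l}(X)$ is $F^{(l)}$ and not some reordered product. This resolves immediately because only powers of one map occur. Everything else is formal.
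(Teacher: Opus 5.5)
Your proposal is correct and follows the paper's proof: expand $\sigma_F^{\,l}=(\Id+\Delta_F)^l$ by the binomial theorem for the commuting operators $\sigma_F$ and $\Delta_F$, apply it to $X$ using $\sigma_F^{\,l}(X)=F^{(l)}$, and truncate using $\Delta_F^k(X)=0$ for $k\ge\tau_K(F)$. Your explicit induction for $\sigma_F^{\,l}(X)=F^{(l)}$, the remark that the binomial expansion holds in every characteristic, and the sharper termination index $\min(l,\tau_K(F)-1)$ are useful refinements of the paper's shorter argument.
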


\begin{proof}
The operators $\sigma_F$ and $\Delta_F = \sigma_F - \Id$ commute, so in the ring of additive endomorphisms of $K[X]$ we can use the binomial formula $\sigma_F^m = (\Id + \Delta_F)^m = \sum_k \binom{m}{k} \Delta_F^k$. Application to the coordinates $X$ yields the claim, because $\sigma_F^m(X) = \Id \circ F^{(m)} = F^{(m)}$. The last sentence follows from $\Delta_F^k(X) = 0$ for $k \ge \tau_K(F)$.\\
\end{proof}

\begin{lemma}\label{torsion_p}
Let $\text{char}\,K = p > 0$ and let $F \in PF(K,n) $. If $p^r \ge \tau_K(F)$, then $F^{(p^r)} = \Id$. In particular, every Pascal finite automorphism has an order that is a power of $p$.
\end{lemma}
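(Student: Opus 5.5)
The plan is to apply the binomial expansion of Lemma~\ref{Newton_sum} with exponent $l = p^r$. Lemma~\ref{Newton_sum} is stated for maps in $\text{PF}_{\mathrm{su}}$, but its proof only uses two facts: that $\sigma_F$ and $\Delta_F=\sigma_F-\Id$ commute, and that the binomial formula holds in the ring of additive endomorphisms of $K[X]$. Both hold for any polynomial map. So for every $F\in PF(K,n)$ I will use
\[
F^{(p^r)} = \sigma_F^{p^r}(X) = \sum_{k=0}^{p^r} \binom{p^r}{k}\Delta_F^k(X).
\]

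Next I invoke the classical fact that $\binom{p^r}{k}\equiv 0 \pmod p$ for $0<k<p^r$. This follows from Lucas' theorem, or from the identity $(1+T)^{p^r}=1+T^{p^r}$ in $\mathbb{F}_p[T]$, which is Frobenius iterated $r$ times. Since $\operatorname{char}K=p$, these coefficients vanish in $K$, and the sum collapses to
\[
F^{(p^r)} = X + \Delta_F^{p^r}(X).
\]
Equivalently, this is the identity $\sigma_F^{p^r}=\Id+\Delta_F^{p^r}$ of commuting operators in characteristic $p$.

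Now I use the hypothesis $p^r\ge \tau_K(F)$. By the definition of depth and \eqref{eq:depth_max_relation}, $\Delta_F^{m}(X_i)=0$ for every $i$ and every $m\ge\tau_K(F)$. In particular $\Delta_F^{p^r}(X)=0$, so $F^{(p^r)}=X=\Id$.

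For the ``in particular'' statement, note that $F\in PF(K,n)$ has finite depth, so some $r$ with $p^r\ge\tau_K(F)$ exists. Then $F^{(p^r)}=\Id$, hence the order of $F$ divides $p^r$ and is therefore a power of $p$. If $F=\Id$, the order is $1=p^0$.

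I expect no real obstacle. The one point needing care is justifying the expansion without the ``su'' restriction, which I handle by noting that the proof of Lemma~\ref{Newton_sum} never used it.
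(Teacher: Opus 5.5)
Your proof is correct and follows the paper's argument. You expand $F^{(p^r)}=\sigma_F^{p^r}(X)=(\Id+\Delta_F)^{p^r}(X)$ via Lemma~\ref{Newton_sum}, kill the middle binomial coefficients by Lucas' theorem, and discard the top term since $p^r\ge\tau_K(F)$. The expansion in Lemma~\ref{Newton_sum} is already stated for arbitrary polynomial maps; only its termination clause carries the ``su'' hypothesis, so your extra justification is harmless but not needed.
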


\begin{proof}
By Lucas' theorem \cite{L}, $\binom{p^r}{k} \equiv 0 \pmod p$ for $0 < k < p^r$. In the sum (\ref{newtons_formula}) for $m = p^r$, only the term $k = 0$ survives, because indices $k$ with non-zero $\Delta_F^k(X)$ satisfy $k < \tau_K(F) \le p^r$. Hence $F^{(p^r)} = \Delta_F^0(X) = X$.\\
\end{proof}

\begin{lemma}\label{PFjacobian}
Every Pascal finite automorphism $F$ satisfies $\det J_F = 1$.
\end{lemma}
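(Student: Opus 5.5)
The plan is to compare two expressions for the Jacobian determinant of the iterates $F^{(l)}$. One expression comes from the chain rule and is exponential in $l$. The other comes from Lemma~\ref{Newton_sum} and is polynomial in $l$. A finite-difference argument then forces $\det J_F=1$.

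First, since $F$ is invertible, $\det J_F$ is a unit of $K[X]$, so $j:=\det J_F\in K^*$. By the chain rule, $J_{F^{(l)}}=\prod_{k=0}^{l-1}(J_F\circ F^{(k)})$. Each factor has determinant $j\circ F^{(k)}=j$, because $j$ is constant. Hence $\det J_{F^{(l)}}=j^l$ for all $l\ge 0$.

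Second, let $\tau=\tau_K(F)<\infty$. Lemma~\ref{Newton_sum} gives
\[
F^{(l)}=\sum_{k=0}^{\tau-1}\binom{l}{k}\Delta_F^k(X),
\qquad\text{so}\qquad
J_{F^{(l)}}=\sum_{k=0}^{\tau-1}\binom{l}{k}A_k,
\]
where $A_k:=J_{\Delta_F^k(X)}$ is an $n\times n$ matrix over $K[X]$ that does not depend on $l$. Expanding the determinant, $\det J_{F^{(l)}}$ is a $K[X]$-linear combination of products $\binom{l}{k_1}\cdots\binom{l}{k_n}$ with $k_i\le\tau-1$. To work in arbitrary characteristic, I will use the integer identity
\[
\binom{l}{a}\binom{l}{b}=\sum_{c}\binom{c}{a}\binom{a}{c-b}\binom{l}{c},
\]
valid for $\max(a,b)\le c\le a+b$. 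It holds over $\mathbb{Z}$ and therefore after reduction to any field. Iterating it shows
\[
\det J_{F^{(l)}}=\sum_{c=0}^{M}C_c\binom{l}{c}
\]
with $M=n(\tau-1)$ and $C_c\in K[X]$ independent of $l$. This step, rewriting products of binomial coefficients via identities valid over $\mathbb{Z}$ so that no division by integers occurs, is the only delicate point, and I expect it to be the main obstacle.

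Third, apply the forward difference operator $(\delta a)_l=a_{l+1}-a_l$ a total of $M+1$ times to the sequence $a_l=\det J_{F^{(l)}}$. Since $\delta\binom{l}{c}=\binom{l}{c-1}$ (Pascal's rule, valid over $\mathbb{Z}$), the polynomial expression is annihilated. On the exponential expression, the same operator gives $j^l(j-1)^{M+1}$. Evaluating at $l=0$ yields $(j-1)^{M+1}=0$ in the field $K$, hence $j=1$.

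In positive characteristic there is a shortcut. Lemma~\ref{torsion_p} gives $F^{(p^r)}=\Id$ for $p^r\ge\tau$, so $j^{p^r}=1$, i.e.\ $(j-1)^{p^r}=0$, and again $j=1$. The difference argument above, however, covers every characteristic uniformly.
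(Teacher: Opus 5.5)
Your proof is correct, and its main argument takes a different route from the paper's. The paper splits by characteristic.

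\begin{itemize}
\item For $\mathrm{char}\,K=p>0$ it invokes Lemma~\ref{torsion_p}: Lucas' theorem gives $F^{(p^r)}=\Id$, hence $c^{p^r}=1$ and $c=1$. This is exactly your closing ``shortcut.''
\item Only in characteristic zero does it compare $\det J_{F^{(m)}}=c^m$ with the fact that the coordinates of $F^{(m)}$ are polynomial functions of $m$.
\end{itemize}

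Your main argument makes that second idea characteristic-free. You keep everything in the binomial basis $\binom{l}{c}$ with integer coefficients, so you never divide by $k!$. That division is why the paper's phrase ``polynomial function of $m$'' only makes sense when $\mathbb{Q}\subseteq K$. Applying $\delta^{M+1}$ and evaluating at $l=0$ to get $(j-1)^{M+1}=0$ also sharpens the paper's rather loosely worded finite-difference remark.

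The step you flag as the main obstacle is routine. Your subset-counting identity is correct. Alternatively, the discrete Leibniz rule $\delta(ab)_l=(\delta a)_l\,b_{l+1}+a_l\,(\delta b)_l$ shows directly that $\delta^{p+1}a=0$ and $\delta^{q+1}b=0$ imply $\delta^{p+q+1}(ab)=0$, again with integer coefficients only.

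Your route gives one uniform proof covering every characteristic. The paper's route is shorter in characteristic $p$ because Lemma~\ref{torsion_p} is already available, and the paper wants that lemma anyway for its torsion statements.
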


\begin{proof}

For an automorphism, $c := \det J_F \in K^*$ is a constant, and by the chain rule $\det J_{F^{(m)}} = c^m$.

If $\text{char}\,K = p > 0$, then from Lemma~\ref{torsion_p} we have $c^{p^r} = \det J_{\Id} = 1$, meaning $(c-1)^{p^r} = c^{p^r} - 1 = 0$, so $c = 1$.

If $\text{char}\,K = 0$, then from Lemma~\ref{Newton_sum}, the coordinates of $F^{(m)}$ are polynomial functions of $m$ (with coefficients in $K$); hence $m \mapsto c^m = \det J_{F^{(m)}}$ is a polynomial function $w \colon \mathbb{N} \to K$. Since $w(0) = 1 \neq 0$, the polynomial $w$ is not identically zero. On the other hand, $c \in K^*$ and $c^m$ is a polynomial in $m$ only if $c = 1$. For $c \neq 1$ the sequence $c^m$ is a non-constant geometric progression, which cannot coincide with the values of any polynomial (any non-zero polynomial has at most finitely many zeros in $\mathbb{N}$, while $(c-1)^m \neq 0$ forces the differences $\Delta c^m = (c-1)c^m$ to be non-zero for all $m$, contradicting eventual vanishing of finite differences of a polynomial). Hence $c = 1$.

\end{proof}

\begin{lemma}\label{inventory_of_PF}
The following automorphisms are Pascal finite over any field $K$.
\begin{itemize}
    \item[(i)] translations $T_b = \Id + b$, $b \in K^n$, with $\tau_K(T_b ) \le 2$
    \item[(ii)] linear transvections, e.g., $F=(X_1 + \lambda X_2, X_2)$, with $\tau_K(F) \le 2$
    \item[(iii)] mappings $U = (X_1 + q(X_2), X_2 + c)$, $q \in K[X_2]$, $c \in K$
    \item[(iv)] any conjugations $W^{-1} \circ P \circ W$ of the above by invertible polynomial mappings $W$ (If $W$ is linear, then $\tau_K(P)$ is preserved.)
\end{itemize}
\end{lemma}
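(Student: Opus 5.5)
The plan is to compute $\Delta_F$ explicitly on the coordinates in each case and show that some power vanishes on $X_1,\dots,X_n$. Pascal finiteness is defined by $\Delta_F^m(X_i)=0$ for all $i$, and by \eqref{eq:depth_max_relation} it suffices to bound $\tau_K^F(X_i)$ for each coordinate. Throughout we use that $\sigma_F(P)=P\circ F$. Hence $\Delta_F(P)=P\circ F-P$, and $\Delta_F$ is $K$-linear and kills constants.

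For (i), $\Delta_{T_b}(X_i)=(X_i+b_i)-X_i=b_i\in K$, so $\Delta_{T_b}^2(X_i)=0$ and $\tau_K(T_b)\le 2$.

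For (ii), take the transvection $F=\Id+\lambda X_2 e_1$, or more generally $X\mapsto X+\lambda\,\ell(X)\,v$ with $\ell(v)=0$. Then $\Delta_F(X_1)=\lambda X_2$ and $\Delta_F(X_2)=0$. Since $\Delta_F(X_2)=0$, also $\Delta_F^2(X_1)=\lambda\Delta_F(X_2)=0$, so $\tau_K(F)\le2$.

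For (iii), write $U=(X_1+q(X_2),X_2+c)$. The second coordinate behaves like a translation: $\Delta_U(X_2)=c$ and $\Delta_U^2(X_2)=0$. For the first coordinate, $\Delta_U(X_1)=q(X_2)$. For a polynomial $r(X_2)$ we have $\Delta_U(r(X_2))=r(X_2+c)-r(X_2)$. This is the forward difference operator in the variable $X_2$, which lowers degree by at least one, and it kills constants. Therefore $\Delta_U^{k}(q(X_2))=0$ for $k>\deg q$. It follows that $\Delta_U^{\deg q+2}(X_1)=0$, so $U$ is Pascal finite with $\tau_K(U)\le \max(\deg q+2,2)$. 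This holds in any characteristic, since the argument only uses the degree drop of the difference operator.

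For (iv), the main step is the identity $\sigma_{W^{-1}\circ P\circ W}=\sigma_W\circ\sigma_P\circ\sigma_W^{-1}$. Consequently $\Delta_{W^{-1}PW}=\sigma_W\circ\Delta_P\circ\sigma_W^{-1}$ and $\Delta^k_{W^{-1}PW}=\sigma_W\Delta_P^k\sigma_W^{-1}$. Now $\sigma_W^{-1}(X_i)=\sigma_{W^{-1}}(X_i)$ is a polynomial $g_i$. By \cite{ABCH2}, Proposition~2.1, $\Delta_P$ is locally nilpotent, so $\Delta_P^{m(g_i)}(g_i)=0$, and hence $\Delta^{m}_{W^{-1}PW}(X_i)=0$ for $m=\max_i m(g_i)$.

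If $W$ is linear, each $g_i$ is a linear form $\sum a_{ij}X_j$. By linearity, $\Delta_P^k(g_i)=\sum a_{ij}\Delta_P^k(X_j)$, which vanishes once $k\ge\tau_K(P)$. So $\tau_K(W^{-1}PW)\le\tau_K(P)$. Applying the same bound to the conjugation by $W^{-1}$ gives the reverse inequality, so the depth is preserved.

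The only nonroutine input is the local nilpotence of $\Delta_P$ used in (iv), which is taken from \cite{ABCH2}; everything else is direct computation.
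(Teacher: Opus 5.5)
Your proof is correct. For (i) and (ii) it agrees with the paper, which notes that these maps are quasi-translations: $F=\Id+H$ with $H\circ F=H$, so $\Delta_F^2(X)=0$.

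For (iii) and (iv) the paper argues only by citation, while you prove both directly.

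\begin{itemize}
\item[(iii)] The paper invokes Theorem~2.2 of \cite{ABCH2}, which lifts Pascal finiteness from the one-dimensional translation $X_2\mapsto X_2+c$ to the triangular map. You instead observe that $\Delta_U$ restricted to $K[X_2]$ is the forward difference $r\mapsto r(X_2+c)-r(X_2)$, which strictly lowers degree and kills constants.
\item[(iv)] The paper invokes Theorem~2.1 of \cite{ABCH2} on conjugation invariance. You use the identity $\Delta_{W^{-1}\circ P\circ W}=\sigma_W\circ\Delta_P\circ\sigma_W^{-1}$ together with local nilpotence of $\Delta_P$, applied to the components of $W^{-1}$.
\end{itemize}

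Your route buys three things.
\begin{itemize}
\item It is more self-contained.
\item It gives the explicit bound $\tau_K(U)\le\max(\deg q+2,2)$.
\item It actually proves the parenthetical claim that linear conjugation preserves depth, by linearity of $\Delta_P$ on linear forms and the symmetry $W\leftrightarrow W^{-1}$. The paper only asserts this claim through the citation.
\end{itemize}

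Your (iv) still rests on the local-nilpotence result (Proposition~2.1 of \cite{ABCH2}). So the gain there is transparency rather than independence from that paper. The paper's citations are shorter and cover more general triangular extensions than your hands-on computation, which is tailored to the specific maps in the statement.
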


\begin{proof}
(i) and (ii): in both cases $F = \Id + H$ with $H \circ F = H$ (for translations $H = b$ is constant; for transvections $H = (\lambda X_2, 0)$ does not depend on $X_1$), so $\Delta_F^2(X) = H \circ F - H = 0$; these are quasi-translations. (iii): the one-dimensional $f(X_2) = X_2 + c$ is Pascal finite as in (i), so the claim follows from Theorem 2.2 of \cite{ABCH2}. (iv): this is exactly Theorem 2.1 of \cite{ABCH2}.\\
\end{proof}

\begin{lemma}\label{normalization_of_JvdK_factors}
\begin{enumerate}
 \item Every $A \in \text{Aff}(K,2)$ can be written as $A = L \circ T_b$ with $L \in \text{GL}_2(K)$ and a translation $T_b$.
 \item Every $E \in J(K,2)$ can be written as $E = D \circ U$ with $D = \diag(\alpha, \beta) \in \text{GL}_2(K)$ and $U = (x + \alpha^{-1}p(y), y + \beta^{-1}c)$ as in Lemma~\ref{inventory_of_PF}(iii).
 \item Every $F \in \text{Aut}_K K[X_1, X_2]$ is a word $F = g_1 \circ \dots \circ g_N$, in which each letter is either linear or of type (i)/(iii) from Lemma~\ref{inventory_of_PF}.
\end{enumerate}
  
\end{lemma}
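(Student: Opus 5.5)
Part 1 is a direct computation. Writing $A(X)=MX+b$ with $M\in\text{GL}_2(K)$ and $b\in K^2$, I take $L=M$ and $T_{M^{-1}b}$. Then
\[
L\circ T_{M^{-1}b}(X)=M(X+M^{-1}b)=MX+b=A(X).
\]
This gives the required form $A=L\circ T_b$ with $b$ replaced by $M^{-1}b$.

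For part 2, I will use the standard form of triangular automorphisms recalled in the Preliminaries. In two variables, any $E\in J(K,2)$ is
\[
E=(\alpha x+p(y),\ \beta y+c),\qquad \alpha,\beta\in K^*,\ p\in K[y],\ c\in K.
\]
Here I use the convention $F_1\in K[X_1,X_2]$, $F_2\in K[X_2]$, and Jacobian invertibility forces the leading coefficients $\alpha,\beta$ to be units. I set $D=\diag(\alpha,\beta)$ and $U=(x+\alpha^{-1}p(y),\ y+\beta^{-1}c)$. Composing functionally ($U$ acts first), I get
\[
D\circ U=\bigl(\alpha(x+\alpha^{-1}p(y)),\ \beta(y+\beta^{-1}c)\bigr)=(\alpha x+p(y),\ \beta y+c)=E.
\]
The map $U$ has the shape of Lemma~\ref{inventory_of_PF}(iii), with $q=\alpha^{-1}p\in K[X_2]$ and constant $\beta^{-1}c\in K$. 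The only point needing care is that $\alpha,\beta\neq 0$. This holds because $\det J_E=\alpha\beta$ must be a nonzero constant for $E$ to be invertible; equivalently, it follows from the standard fact cited for $J(K,n)$.

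For part 3, I invoke Theorem~\ref{thm:JvdK} to write $F=g_1\circ\dots\circ g_N$ with each $g_i\in\text{Aff}(K,2)\cup J(K,2)$. I then replace each affine letter by $L\circ T_b$ using part 1, and each triangular letter by $D\circ U$ using part 2. Diagonal matrices are linear. The resulting word therefore has every letter linear, a translation of type (i), or a map of type (iii), which is exactly the claim.

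I expect no step to be a genuine obstacle. The only subtlety is bookkeeping of the composition order. With the paper's convention that the right factor acts first, the normalization must put the linear part on the left, as done above. One should also note that in part 3 the word need no longer alternate, but the statement does not require alternation.
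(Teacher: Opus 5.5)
Your proposal is correct and takes the same route as the paper. The paper simply declares the decompositions in parts~1 and~2 immediate and gets part~3 by substituting them into the Jung--van der Kulk affine--triangular word. Your explicit checks $M(X+M^{-1}b)=MX+b$ and $D\circ U=(\alpha X_1+p(X_2),\,\beta X_2+c)=E$, and your remark that the new word need not alternate, fill in details the paper leaves implicit.
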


\begin{proof}
The decompositions of the factors are immediate; the last sentence follows from the Jung--van der Kulk theorem after substituting the decompositions into the affine-triangular word.
\end{proof}

\begin{lemma}\label{shif_linear_left}
Let $F = g_1 \circ \dots \circ g_N$, where each letter is linear ($g_i = L_i \in \text{GL}_2(K)$) or Pascal finite ($g_i = P_i$). Let $L := L_{i_1} \circ \dots \circ L_{i_r}$ be the product of linear letters in order of appearance. Then
\[
F = L \circ C_1 \circ \dots \circ C_s, \quad C_j = W_j^{-1} \circ P_j \circ W_j,
\]
where $W_j$ is the product (in order of appearance) of linear letters standing in the word to the right of $P_j$. All $C_j$ are Pascal finite with $\tau_K(C_j) = \tau_K(P_j)$.
\end{lemma}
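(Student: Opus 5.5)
The plan is to push every linear letter to the far left by repeatedly rewriting $P \circ L = L \circ (L^{-1} \circ P \circ L)$, and to track what each Pascal finite letter becomes. Concretely, I will prove by induction on the number $r$ of linear letters the following identity:
\[
F = L \circ C_1 \circ \dots \circ C_s, \qquad C_j = W_j^{-1}\circ P_j \circ W_j .
\]
Here the $P_j$ are the Pascal finite letters, listed in order of appearance.

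Before the induction I will record the telescoping mechanism. Fix a Pascal finite letter $P_j$ and write the subword to its right as an alternating product of Pascal finite letters and linear letters. Inserting $W_j W_j^{-1}$ gives
\[
g_{k+1}\circ\cdots\circ g_N = W_j \circ \bigl(\text{product of the } C_{j'} \text{ for } j'>j\bigr).
\]
This holds by the induction hypothesis applied to the shorter word. Therefore
\[
P_j\circ g_{k+1}\circ\cdots\circ g_N = W_j\circ C_j\circ \prod_{j'>j}C_{j'} .
\]
It will be cleanest to run the induction on $N$ by peeling off the leftmost letter $g_1$ of $F = g_1 \circ G$, where $G = g_2\circ\cdots\circ g_N$. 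By induction, $G = L' \circ C_2' \circ \cdots$, where $L'$ is the product of the linear letters of $G$. There are two cases.

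\textbf{Case $g_1 = L_1$ linear.} Then $F = (L_1 \circ L') \circ \cdots$. The set of linear letters to the right of each $P_j$ is unchanged, so the $W_j$ and $C_j$ are unchanged, and $L = L_1 \circ L'$ as required.

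\textbf{Case $g_1 = P_1$ Pascal finite.} Write $F = P_1 \circ L' \circ \cdots = L' \circ (L'^{-1}\circ P_1 \circ L')\circ\cdots$. Here $L'$ is exactly the product, in order of appearance, of the linear letters to the right of $P_1$, so $W_1 = L'$ and $C_1 = W_1^{-1}\circ P_1\circ W_1$. The other $C_j$ are unchanged, and $L = L'$.

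The base case $N=0$ (or $N=1$) is trivial. The last assertion, that each $C_j$ is Pascal finite with $\tau_K(C_j)=\tau_K(P_j)$, follows from Lemma~\ref{inventory_of_PF}(iv), because $W_j$ is linear. If one prefers a direct check of depth preservation, note that $\sigma_{W^{-1}\circ P\circ W} = \sigma_W\circ\sigma_P\circ\sigma_W^{-1}$. Hence
\[
\Delta_{C}^k = \sigma_W\circ\Delta_P^k\circ\sigma_W^{-1}.
\]
Since $\sigma_W^{-1}$ maps each $X_i$ to a linear combination of the $X_l$, and $\Delta_P^k$ is $K$-linear, the vanishing of $\Delta_P^k(X_l)$ for all $l$ is equivalent to the vanishing of $\Delta_C^k(X_i)$ for all $i$. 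By \eqref{eq:depth_max_relation}, this gives equality of depths.

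The main (though mild) obstacle is the bookkeeping of the orientation convention. One must check that the conjugation comes out as $W_j^{-1}\circ P_j\circ W_j$ rather than $W_j\circ P_j\circ W_j^{-1}$, with $W_j$ taken in order of appearance. This is exactly what the second case of the induction step verifies.
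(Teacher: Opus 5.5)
Your proof is correct and is essentially the paper's argument: both rest on the identity $P\circ L = L\circ(L^{-1}\circ P\circ L)$ together with Lemma~\ref{inventory_of_PF}(iv). The only difference is the bookkeeping. The paper inducts on the number of linear letters, moving the leftmost one past all preceding Pascal finite letters and then composing the successive conjugators. You induct on $N$ and move the collected linear part $L'$ of the tail past a single $P_1$, so each $P_j$ is conjugated exactly once, by $W_j$. Your direct check of depth invariance via $\Delta_C^k=\sigma_W\circ\Delta_P^k\circ\sigma_W^{-1}$ is a useful addition, since the paper only asserts that linear conjugation preserves $\tau_K$.
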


\begin{proof}
We give a proof by induction on the number of linear letters. For the leftmost linear letter $L_{i_1}$ we write $g_1 \dots g_{i_1-1} \circ L_{i_1} = L_{i_1} \circ (L_{i_1}^{-1} g_1 L_{i_1}) \dots (L_{i_1}^{-1} g_{i_1-1} L_{i_1})$ and apply the induction hypothesis to the shorter (by this letter) word. The letters $g_1, \dots, g_{i_1-1}$ are Pascal finite (there are no linear letters to the left of $L_{i_1}$), and their linear conjugations are Pascal finite with the same Pascal depth by Lemma~\ref{inventory_of_PF}(iv). Composing the conjugators, we obtain exactly the given $W_j$.\\
\end{proof}

\section{Main Theorem}
\label{main}
 For $\delta \in K^*$ we write $\text{diag}(\delta, 1)$ both for the
diagonal matrix $\bigl(\begin{smallmatrix}\delta & 0\\ 0 &
1\end{smallmatrix}\bigr)$ and for the linear automorphism it induces,
\[
  \text{diag}(\delta, 1)\colon (X_1, X_2) \longmapsto (\delta X_1,\, X_2),
\]
and more generally $\text{diag}(\delta, 1, \dots, 1)$ for the map
$(X_1, \dots, X_n) \mapsto (\delta X_1, X_2, \dots, X_n)$. Here
$\det J_F \in K^*$, so 
$\text{diag}(\det J_F, 1)$ is a well-defined element of $\text{GL}_2(K)$. 

\begin{theorem}\label{main_thm}
Let $K$ be any field and let $F \in \text{Aut}_K K[X_1, X_2]$ satisfy $F(0) = 0$. Then
\[
F = \diag(\det J_F, 1) \circ P_1 \circ \dots \circ P_s,
\]
where each $P_i$ is a Pascal finite automorphism. The factors $P_i$ can be chosen as linear transvections and linear conjugations of mappings $(X_1 + q(X_2), X_2 + c)$. If $\text{char}\,K = p > 0$, then each $P_i$ has an order dividing $p^2$.
\end{theorem}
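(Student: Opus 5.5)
We will prove the statement in three stages: write $F$ as a word in linear and Pascal finite letters, collect all linear letters into one matrix on the left, and then factor that matrix as $\diag(\det J_F,1)$ times a product of linear transvections. For the first stage we apply Lemma~\ref{normalization_of_JvdK_factors}(3) to $F$. This writes $F=g_1\circ\dots\circ g_N$, where each letter is either in $\mathrm{GL}_2(K)$, a translation $T_b$, or a map $U=(X_1+q(X_2),X_2+c)$. The letters of the last two kinds are Pascal finite by Lemma~\ref{inventory_of_PF}(i),(iii). Next we apply Lemma~\ref{shif_linear_left}, which gives
\[
F=L\circ C_1\circ\dots\circ C_s,\qquad C_j=W_j^{-1}\circ P_j\circ W_j .
\]
Here $L\in\mathrm{GL}_2(K)$, each $W_j$ is linear, and each $C_j$ is Pascal finite with $\tau_K(C_j)=\tau_K(P_j)$. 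Every $C_j$ is therefore a linear conjugate of a translation or of a map of type (iii). A translation is itself of type (iii), taking $q$ constant and $c=b_2$.

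The second stage is to identify $\det L$. Every $C_j$ has $\det J_{C_j}=1$ by Lemma~\ref{PFjacobian}. The chain rule then gives $\det J_F=\det L$, because the Jacobian determinants are nonzero constants and multiply.

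The third stage factors $L$. Since $\det L=\det J_F$, the matrix $\diag(\det J_F,1)^{-1}L$ lies in $\mathrm{SL}_2(K)$. Over any field $\mathrm{SL}_2(K)$ is generated by elementary transvections, for instance by Gaussian elimination: a matrix $\bigl(\begin{smallmatrix}a&b\\ c&d\end{smallmatrix}\bigr)$ is reduced to the identity using at most four elementary row operations of the form add $\lambda$ times one row to another. We therefore write $L=\diag(\det J_F,1)\circ E_1\circ\dots\circ E_t$, where each $E_i$ is a linear transvection $(X_1+\lambda X_2,X_2)$ or $(X_1,X_2+\lambda X_1)$. Both kinds are Pascal finite by Lemma~\ref{inventory_of_PF}(ii); the second is a linear conjugate of the first by the coordinate swap. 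Inserting this into the expression for $F$ gives the required decomposition, and every factor has one of the types listed in the statement.

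For the order bound in characteristic $p$, Lemma~\ref{torsion_p} reduces the claim to showing $\tau_K(P_i)\le p^2$. Linear conjugation preserves depth, so it suffices to bound the depth of a transvection and of a map $U=(X_1+q(X_2),X_2+c)$. Transvections have depth at most $2\le p^2$. For $U$, direct computation is simpler than estimating depth. With $q=\sum a_k X_2^k$, the $m$-th iterate is
\[
U^{(m)}=\Bigl(X_1+\sum_{j=0}^{m-1}q(X_2+jc),\;X_2+mc\Bigr).
\]
For $m=p^2$ the second coordinate is $X_2$. For $c\neq0$, the first coordinate is $X_1+p\sum_{j=0}^{p-1}q(X_2+jc)=X_1$, since the sum over $j<p^2$ is $p$ copies of a sum over a full period, and $p=0$ in $K$. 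For $c=0$ it is $X_1+p^2q(X_2)=X_1$. In fact this gives the sharper bound that the order divides $p$.

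The case of a translation appearing inside the Jung--van der Kulk word is already handled, since translations are of type (iii). The one thing left to check is that $F(0)=0$ plays no role: the argument never uses it, and it is consistent with the translation factors being absorbed into the Pascal finite $C_j$. The only nontrivial points are the $\mathrm{SL}_2$ generation and the bookkeeping for the order.
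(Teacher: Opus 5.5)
Your proposal is correct and follows essentially the paper's own route: the Jung--van der Kulk word, shifting the linear letters to the left, $\det J_F=\det L$, factoring $\diag(\det J_F,1)^{-1}L\in\mathrm{SL}_2(K)$ into transvections, and using the iterate formula for $U$ to get $U^{(p^2)}=\Id$. You are also right that $F(0)=0$ is never used.

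One slip: your closing remark that the order divides $p$ holds only when $c=0$, or more generally when $\sum_{j=0}^{p-1}q(X_2+jc)=0$. For example, $U=(X_1+X_2^{p-1},X_2+1)$ has $U^{(p)}=(X_1-1,X_2)\neq\Id$, hence order exactly $p^2$, so $p^2$ is the correct bound.
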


\begin{proof}
By Lemma~\ref{normalization_of_JvdK_factors}, we write $F$ as a word in linear letters and Pascal finite letters of type (i)/(iii), and from Lemma~\ref{shif_linear_left} we obtain $F = L \circ C_1 \dots C_k$ with $L \in \text{GL}_2(K)$ and $C_j$ being Pascal finite linear conjugations of letters of type (i)/(iii). Each such letter has Jacobian 1, and conjugation does not change the Jacobian determinant, so $\det J_F = \det L$.

We set $D := \diag(\det L, 1)$ and $S := D^{-1}L \in \text{SL}_2(K)$. The group $\text{SL}_2(K)$ is generated over any field by elementary transvections, which are Pascal finite (Lemma~\ref{inventory_of_PF}(ii)); decomposing $S$ into transvections and appending them before $C_1, \dots, C_k$, we obtain the desired form $F = D \circ P_1 \dots P_s$.

Order of factors in characteristic $p$: a transvection has order $p$; for $U = (X_1 + q(X_2), X_2 + c)$ we have $U^{(p)} = (X_1 + \sum_{j=0}^{p-1} q(X_2 + jc), X_2)$, which is an elementary mapping, so $U^{(p^2)} = (X_1 + p \cdot (\dots), X_2) = \Id$. Conjugation preserves order (Lemma~\ref{inventory_of_PF}(iv)), so all factors $C_j$ satisfy $\mathrm{ord}(C_j) \mid p^2$. (The assumption $F(0) = 0$ guarantees that $(P_1 \cdots P_s)(0) = D^{-1}F(0) = 0$. Individual factors, e.g., translations, do not have to fix it.)\\
\end{proof}

\begin{corollary}
For any $F \in \text{Aut}_K K[X_1, X_2]$ (without the assumption about $F(0)$), the following are equivalent.
\begin{itemize}
    \item[(1)] $F$ is a composition of finitely many Pascal finite automorphisms.
    \item[(2)] $\det J_F = 1$.
\end{itemize}\label{cor:det-char}
\end{corollary}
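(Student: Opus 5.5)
The implication (1)$\Rightarrow$(2) follows from Lemma~\ref{PFjacobian} and the chain rule. If $F = P_1 \circ \dots \circ P_s$ with each $P_i$ Pascal finite, then $\det J_F$ is the product of the constants $\det J_{P_i}$, evaluated along the composition. Each of these constants equals $1$, so $\det J_F = 1$. No assumption on $F(0)$ is needed here.

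For (2)$\Rightarrow$(1), I reduce to Theorem~\ref{main_thm} by removing the translation part.

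\begin{enumerate}
\item Put $b := F(0) \in K^2$ and $G := T_{-b} \circ F$, where $T_{-b}$ is the translation by $-b$. Then $G(0) = 0$ and $G \in \text{Aut}_K K[X_1,X_2]$.
\item Since $J_{T_{-b}}$ is the identity, $\det J_G = \det J_F = 1$.
\item Theorem~\ref{main_thm} applied to $G$ gives $G = \diag(1,1) \circ P_1 \circ \dots \circ P_s = P_1 \circ \dots \circ P_s$, with every $P_i$ Pascal finite.
\item Hence $F = T_b \circ P_1 \circ \dots \circ P_s$. By Lemma~\ref{inventory_of_PF}(i), $T_b$ is Pascal finite of depth at most $2$, so $F$ is a finite composition of Pascal finite automorphisms.
\end{enumerate}

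The only point that needs care is the composition order when the translation is split off. Since $G = T_{-b} \circ F$, we have $G(0) = F(0) - b = 0$, so $T_b$ sits on the left, acting last. Placing it on the right would instead require $F^{-1}(0)$, which is equally possible but unnecessary. Everything else is immediate from the cited results. The equivalence also gives a characterization: $\langle PF(K,2)\rangle_{\circ}$ is exactly the subgroup of $\text{Aut}_K K[X_1,X_2]$ of automorphisms with Jacobian determinant $1$.
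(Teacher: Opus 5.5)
Your proof is correct and is the same argument the paper gives. For (1)$\Rightarrow$(2) you use Lemma~\ref{PFjacobian} with multiplicativity of the Jacobian determinant. For (2)$\Rightarrow$(1) you split off the Pascal finite translation $T_{F(0)}$ on the left and apply Theorem~\ref{main_thm} to $T_{-F(0)}\circ F$, whose diagonal factor is then $\diag(1,1)=\Id$.
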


\begin{proof}
$(1) \implies (2)$: Lemma~\ref{PFjacobian} and the multiplicativity of the Jacobian determinant. $(2) \implies (1)$: we write $F = T_{F(0)} \circ F_0$ with $F_0(0) = 0$; $T_{F(0)}$ is Pascal finite, and $F_0$ satisfies $\det J_{F_0} = \det J_F = 1$, so in Theorem~\ref{main_thm} the diagonal factor vanishes.\\
\end{proof}

\begin{corollary}[Question~3.1 from \cite{ABCH2} in dimension~2] \label{question_3.1}
 The Pascal Finite Generators Conjecture in dimension $2$ holds, i.e.
\[ \text{Aut}_{su}(K,2) \subseteq \langle PF(K,2)\rangle_{\circ}.\]
In other words, every $F = \Id + H \in \text{Aut}_K K[X_1, X_2]$, where $\mathrm{ord} H \ge 2$, is a composition of Pascal finite automorphisms --- over any field $K$, in any characteristic.
\end{corollary}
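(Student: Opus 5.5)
The plan is to deduce Corollary~\ref{question_3.1} directly from Corollary~\ref{cor:det-char}, or equivalently from Theorem~\ref{main_thm}. Take $F = \Id + H \in \text{Aut}_K K[X_1,X_2]$ with $\mathrm{ord}\, H \ge 2$, i.e.\ $F \in \text{Aut}_{su}(K,2)$. We then need two facts. First, since $H$ has no constant term, $F(0)=0$. Second, since $H$ has no linear terms, $J_F(0) = \Id$.

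The key step is to read off the constant value of the Jacobian determinant. Because $F$ is an automorphism, the chain rule applied to $G \circ F = \Id$ gives $\det J_G(F)\cdot \det J_F = 1$. Hence $\det J_F$ is a unit of $K[X_1,X_2]$, that is, a constant $c \in K^*$. Evaluating at the origin gives $c = \det J_F(0) = \det \Id = 1$.

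With $\det J_F = 1$ and $F(0) = 0$ in hand, Theorem~\ref{main_thm} gives
\[
F = \diag(1,1) \circ P_1 \circ \dots \circ P_s = P_1 \circ \dots \circ P_s,
\]
with each $P_i$ Pascal finite. Therefore $F \in \langle PF(K,2)\rangle_{\circ}$. Alternatively, one can apply the implication $(2)\Rightarrow(1)$ of Corollary~\ref{cor:det-char} directly. No assumption on $\operatorname{char} K$ enters, because Theorem~\ref{main_thm} holds over every field.

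No step here is a real obstacle. The only point that needs care is justifying that $\det J_F$ is constant, so that evaluating at the origin determines it. The chain-rule argument above handles this, and the paper already uses the same fact in Lemma~\ref{PFjacobian}. All the substantive work has been done in Theorem~\ref{main_thm}.
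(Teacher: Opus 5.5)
Your proposal is correct and follows the paper's proof: you get $\det J_F = 1$ because the Jacobian determinant of an automorphism is a nonzero constant and $J_F(0)$ is the identity, and then you apply Theorem~\ref{main_thm} (equivalently Corollary~\ref{cor:det-char}) with the diagonal factor trivial. Your chain-rule argument that $\det J_F \in K^*$ makes explicit a step the paper only asserts.
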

\begin{proof}
Every $F$ of the form~(\ref{xh}) satisfies $\det J_F = 1$.
Indeed, 
$\det J_F$ is a non-zero constant  and $J_H(0)=0$ implies $\det J_F(0)=\det(I + J_H(0)) = \det I = 1$. By Theorem \ref{main_thm} and Corollary~\ref{cor:det-char} stronger condition holds, namely 
\[
  \langle PF(K,2)\rangle_{\circ} \;=\;
  \bigl\{\, F \in \text{Aut}_K K[X_1,X_2] : \det J_F = 1 \,\bigr\}.
\]
The conjecture in dimension $2$ follows at once as the special case.\\
\end{proof}

\begin{corollary}\label{gener_by_p-primary_torsion}
Let $\text{char}\,K = p > 0$. The group $\{F \in \text{Aut}_K K[X_1, X_2] \colon F(0) = 0, \det J_F = 1\}$ is generated by elements of order dividing $p^2$. In particular, for $p = 2$ by elements of order dividing $4$.
\end{corollary}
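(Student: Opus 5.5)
The plan is to read this corollary off Theorem~\ref{main_thm}, applied to an element $F$ of the group
\[
G_0:=\{F \in \text{Aut}_K K[X_1, X_2] : F(0)=0,\ \det J_F=1\}.
\]
Since $\det J_F=1$, the diagonal factor $\diag(\det J_F,1)$ is the identity. Theorem~\ref{main_thm} then gives $F=P_1\circ\dots\circ P_s$, where each $P_i$ has order dividing $p^2$. The difficulty is that $G_0$ requires $F(0)=0$, while the theorem's proof explicitly allows individual factors, such as translations, not to fix the origin. So the factors need not lie in $G_0$. The statement can therefore be read in two ways: generation inside $\text{Aut}_K K[X_1,X_2]$, which is immediate, or generation by elements of $G_0$ itself. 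I aim to prove the stronger reading.

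The approach is to replace the factors by origin-fixing elements. I would first observe that the set of elements of $G_0$ whose order divides $p^2$ is closed under conjugation by elements of $G_0$. I would then rework the decomposition so that no translation appears at all. Tracking the proof, translations enter only through the affine letters $A=L\circ T_b$ of Lemma~\ref{normalization_of_JvdK_factors}, and through the constants $c$ in the letters $U=(X_1+q(X_2),X_2+c)$.

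To remove them, I would take the Jung--van der Kulk word and conjugate it by suitable translations, sliding all translation parts to the far left. Conjugating a triangular map by a translation gives another triangular map. Conjugating a linear map by a translation gives a linear map composed with a translation, and that translation can again be pushed left. At the end the collected translation equals $T_{F(0)}=\Id$. This leaves $F$ written as a word in linear maps and triangular maps fixing the origin, namely maps $(\alpha X_1+a(X_2),\beta X_2)$ with $a(0)=0$. Each such triangular map splits as a diagonal map composed with $U=(X_1+q(X_2),X_2)$, where $q(0)=0$. Applying Lemma~\ref{shif_linear_left} then gives $F=L\circ C_1\circ\dots\circ C_k$. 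Here each $C_j$ is a linear conjugate of an elementary map fixing $0$, so $C_j(0)=0$, and $\det L=1$. Finally, writing $L$ as a product of transvections, each of which fixes $0$, gives factors lying in $G_0$.

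For the orders, I would argue as in the theorem. Now $c=0$, so each $U$ has $U^{(p)}=(X_1+p\,q(X_2),X_2)=\Id$, and so its order already divides $p$. Transvections also have order $p$. Conjugation preserves order, so the orders of all factors divide $p$, and a fortiori divide $p^2$. The case $p=2$ is then just a specialization.

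The main obstacle is the bookkeeping in the translation-sliding step. One must check that moving a translation past a triangular letter keeps the letter triangular. Composing $(\alpha X_1+a(X_2),\beta X_2+\gamma)$ with a translation only changes the constants, and these can be absorbed into a new translation on the left. If that bookkeeping becomes unwieldy, I would fall back on the weaker reading of the corollary. That reading follows verbatim from Theorem~\ref{main_thm} with $\det J_F=1$, since the $P_i$ are elements of orders dividing $p^2$ whose product is $F$.
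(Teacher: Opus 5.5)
Your proof is correct, and it proves more than the paper's proof does. The paper invokes Theorem~\ref{main_thm} with trivial diagonal factor and calls the resulting $P_i$ generators. But, as you noticed (and as the paper itself admits at the end of the proof of Theorem~\ref{main_thm}), those factors may be translations or letters $(X_1+q(X_2),X_2+c)$ with $c\neq 0$, and these are not in $G_0$. Read literally, the paper's argument therefore gives only your ``weaker reading'': every element of $G_0$ is a product of elements of $\text{Aut}_K K[X_1,X_2]$ of order dividing $p^2$. That costs nothing beyond the theorem. Your normalization of the Jung--van der Kulk word to origin-fixing letters instead produces generators lying inside $G_0$. Since $c=0$ for every remaining $U$, you get that $G_0$ is generated by its elements of order dividing $p$. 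So the exponent $p^2$ is only an artifact of the letters with $c\neq 0$; indeed $(X_1+q(X_2),X_2+c)=T_{(0,c)}\circ(X_1+q(X_2),X_2)$ is already a product of two elements of order dividing $p$.

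The bookkeeping you flag as the main obstacle is routine. Put $c_i:=(g_{i+1}\circ\cdots\circ g_N)(0)$, so that $c_N=0$ and $c_0=F(0)=0$, and set $h_i:=T_{-c_{i-1}}\circ g_i\circ T_{c_i}$. Then $h_1\circ\cdots\circ h_N=F$ by telescoping, and $h_i(0)=g_i(c_i)-c_{i-1}=0$. Moreover $h_i$ is linear or triangular according as $g_i$ is affine or triangular. One small inaccuracy: composing $(\alpha X_1+a(X_2),\beta X_2+\gamma)$ on the right with $T_b$ does not only change constants, since it replaces $a(X_2)$ by $a(X_2+b_2)$. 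Triangularity is preserved, though, and that is all your argument uses.
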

\begin{proof}
Let $F$ satisfy $F(0)=0$ and $\det J_F=1$. By Theorem~\ref{main_thm} the
diagonal factor $\operatorname{diag}(\det J_F,1)$ is the identity, so
$F=P_1\circ\cdots\circ P_s$ with each $P_i$ Pascal finite of order dividing
$p^2$. Hence, every element of the group is a product of such generators, and
the group is generated by elements of order dividing $p^2$. For $p=2$ this
gives order dividing $4$.
\end{proof}

\begin{remark}[Dimension $n \ge 3$]\label{dim3_rmk}
The proof of Theorem~\ref{main_thm} uses the Jung--van der Kulk theorem solely as a generation fact. The same argument therefore yields the same conclusion in any dimension. Hence, every \emph{tame} automorphism $F$ with $F(0) = 0$ satisfies $F = \diag(\det J_F, 1, \dots, 1) \circ P_1 \dots P_s$ with $P_i$ Pascal finite. In particular, a tame $F$ with $\det J_F = 1$ is a composition of Pascal finite maps. The full Question~3.1 of \cite{ABCH2} for $n \ge 3$ remains open and concerns wild maps. Note that $\text{PF}(K,n) $ contains elements that are wild, for example the Nagata automorphism in characteristic $0$ is in $\text{PF}(\mathbb{Q},3,3) $ (see \cite{ABCH2}, Example 2.1). Its reduction modulo $2$ is an involution, hence Pascal finite with Pascal depth $ \le 2$.
\end{remark}

\begin{remark}\label{sharpness_main}
\begin{enumerate}
 \item The diagonal factor cannot be removed. Over $K = \mathbb{Q}$, the change of variables 
 $(X_1, X_2) \mapsto (X_2, X_1)$ has $\det J_F = -1$, so it is not a composition of Pascal finite maps. Over fields of characteristic $2$, the obstacle disappears because $-1 = 1$.
 \item The decomposition is not canonical, and the number of factors depends on the length of the Jung-van der Kulk word and on the decomposition of the $\text{SL}_2$ part into transvections.
 \item (c) The factors $C_j$ are Pascal finite, but their composition is in general
not Pascal finite (see \cite{ABCH}, Remark 3.1). The theorem asserts
that $F$ \emph{is generated by} Pascal finite maps, not that the class
$PF(K,n)$ is closed under composition. In other words, it places $F$ in the
compositional closure $\langle PF(K,n)\rangle_{\circ}$ without asserting
$\langle PF(K,n)\rangle_{\circ} = PF(K,n)$ --- the inclusion
$PF(K,n) \subsetneq \langle PF(K,n)\rangle_{\circ}$ being strict.
\end{enumerate}
\end{remark}

\section{An illustrative example in characteristic 2 and sharpness of the $p^2$ bound}
\label{summary}

\begin{example}
Let $K = \mathbb{F}_2$, $E_1 = (X_1 + X_2^2, X_2)$, $E_2 = (X_1 + X_2^3, X_2)$, and $A = (X_2, X_1) \in \text{GL}_2(\mathbb{F}_2)$. For the word $F = E_2 \circ A \circ E_1$, i.e.,
\[
F = (X_1^3 + X_1^2 X_2^2 + X_1 X_2^4 + X_2^6 + X_2, X_1 + X_2^2),
\]
shifting the linear letter to the left (Lemma~\ref{shif_linear_left}, with $W = A$, $A^{-1} = A$) gives 
\[
F = A \circ C_2 \circ E_1, \quad C_2 = A^{-1} \circ E_2 \circ A = (X_1, X_2 + X_1^3).
\]
One can check that $\Delta_{C_2}^2(X) = 0$ and $\Delta_{E_1}^2(X) = 0$ (both factors are quasi-translations with Pascal depth $2$,  and of order $2$), while the composition $C_2 \circ E_1$ is not Pascal finite.

Indeed, $H = C_2 \circ E_1 =(X_1 + X_2^2,\, X_2 + (X_1+X_2^2)^3)$ has degree~$6$, and
$\deg H^{(2)} = 36 = 6^2$. Furter's degree-mapping formalism for the amalgam
$\mathrm{GA}_2(K) = \text{Aff}(K,2) *_{B(K,2)} J(K,2)$ is valid over any field. Hence by \cite{F}, Proposition~3 (the equivalence
$\deg g^2 = (\deg g)^2 \Leftrightarrow \deg g^{(m)} = (\deg g)^m$ for all~$m$),
we get $\deg H^{(m)} = (\deg H)^m = 6^m$ for all $m \ge 1$. 
In particular, $H$ is not Pascal finite. 
Since $\det J_A = -1 = 1$ in $\mathbb{F}_2$ and
$A = (X_1 + X_2, X_2) \circ (X_1, X_2 + X_1) \circ (X_1 + X_2, X_2)$ is a product of three transvections, we
obtain a full decomposition of $F$ into five Pascal finite factors, each of
order~$2$, without a diagonal factor --- in agreement with Corollaries~\ref{cor:det-char} and \ref{gener_by_p-primary_torsion}.\\
\end{example}

\begin{remark}
 The use of Furter's Proposition~3 over $\mathbb{F}_2$ is fully
legitimate. Although Furter states his results for automorphisms of
$\mathbb{A}^2_{\mathbb{C}}$, Proposition~3 belongs entirely to Part~I of
\emph{loc.\ cit.}, where the criterion
$\deg g^2 = (\deg g)^2 \Leftrightarrow \deg g^{(m)} = (\deg g)^m$ is derived
from the purely combinatorial structure of an \emph{amalgamated product with
degree mapping}. This structure is furnished, over an arbitrary field~$k$, by
the Jung--van der Kulk decomposition $\mathrm{GA}_2(K) = \text{Aff}(K,2) *_{B(K,2)} J(K,2)$
together with the multiplicativity of the degree along reduced words.
No hypothesis on the characteristic, and in particular no appeal to the
characteristic-zero machinery of Part~II (linear recurrence sequences and the
Skolem--Mahler--Lech theorem, used only for the triangularizable case
$\tau \le 1$), enters the argument. Consequently the equivalence applies
verbatim to $H \in \mathrm{GA}_2(\mathbb{F}_2) $.
\end{remark}

In characteristic zero and dimension 2, generation by affine maps and by
exponentials of locally nilpotent derivations is classical (Jung--van der Kulk
together with Rentschler's theorem \cite{R}), and Pascal finite mappings coincide with
the exponential ones (see \cite{ABCH2} or Corollary \ref{cor:char0-exp} in this paper). The present note adds the following.
\begin{enumerate}
\item[a)] A uniform proof valid in any characteristic, where $\exp$ does not
exist and Pascal finiteness serves as its replacement.
\item[b)] The determinantal equivalence (Corollary \ref{cor:det-char}) identifying
$\det J_F = 1$ as the exact obstruction to decomposability into Pascal finite
factors.
\item[c)] Torsion conclusions in characteristic $p$ (Lemma~\ref{torsion_p}, Corollary~\ref{gener_by_p-primary_torsion}).
\end{enumerate}

\begin{remark}
The bound $p^2$ in Corollary~\ref{gener_by_p-primary_torsion} is sharp in the following sense. The Cremona group $\mathrm{Cr}_2(K)$ contains elements of order exactly $p^2$. It suffices to take the additive group of Witt vectors $W_2(K)$ of length~2, which is an affine space of dimension~2. The action of $W_2(k)$ on itself by translations gives an embedding
\[
  W_2(K) \hookrightarrow \mathrm{GA}_2(K) \subset \mathrm{Cr}_2(K),
\]
and the elements of $W_2(K)$ have order $p^2$.

On the other hand, by Dolgachev's theorem \cite{Do} (see also \cite[Theorem~3.5]{Se2}), the group $\mathrm{Cr}_2(K)$ contains no element of order $p^3$. Since an element of order $p^n$ ($n \ge 3$) would generate a cyclic subgroup of order $p^3$, this excludes orders $p^n$ for all $n \ge 3$. The maximum $p$-power order of an element of $\mathrm{Cr}_2(K)$ is exactly $p^2$.

In particular, the Pascal finite factors in the decomposition of Theorem~\ref{main_thm} have order at most $p^2$, and this bound cannot be improved.
\end{remark}

\paragraph{Acknowledgements}

This research was supported by the AGH University of Krakow within subsidy of Polish Ministry of Science and Higher Education (grant no. 16.16.420.054).

The second author is very grateful to the organizers of the conference
\textit{Order, Algebra, Logic, and Real Algebraic Geometry} (OAL-RAG~2026),
held at Louisiana State University, Baton Rouge, May~8--10, 2026,
and in particular to Charles N.~Delzell and James J.~Madden,
for the invitation to present results on Pascal finite polynomial automorphisms
in the context of real differential algebra and affine algebraic geometry.
The stimulating discussions during the conference contributed to the refined
presentation of the results in this paper.

\section*{Declarations}

The authors have no other relevant financial or non-financial interests to disclose.
The authors declare that they have no conflict of interest.

\end{document}